\newtheorem{thm}{Theorem}
\newtheorem*{thm*}{Theorem}
\newtheorem{lem}[thm]{Lemma}
\newtheorem{cor}[thm]{Corollary}
\newtheorem{prop}[thm]{Proposition}
\newtheorem{conj}{Conjecture}
\newtheorem{defn}[thm]{Definition}
\theoremstyle{remark}
\newtheorem*{rem}{Remark}
\theoremstyle{definition}
\newtheorem*{defn*}{Definition}
\newcommand\Z{\mathbb{Z}}
\newcommand{\C}{\mathcal{C}}
\newcommand\Mod{\operatorname{Mod}}
\title[Aysmptotic translation length]{ An upper bound on the asymptotic translation lengths on the curve graph and fibered faces }
\author{Hyungryul Baik}
\address{Department of Mathematical Sciences, KAIST,  
291 Daehak-ro, Yuseong-gu, Daejeon 34141, South Korea }
\email{hrbaik@kaist.ac.kr}
\author{Hyunshik Shin}
\address{%
        Department of Mathematics, University of Georgia,
		Athens, GA 30602
}
\email{hyunshik.shin@uga.edu}
\author{Chenxi Wu}
\address{Department of Mathematics
Rutgers University
Hill Center - Busch Campus
110 Frelinghuysen Road
Piscataway, NJ 08854-8019, USA }
\email{cwu@math.rutgers.edu}
\keywords{curve complex, pseudo-Anosov, asymptotic translation length, fibered face}
\begin{document}

\begin{abstract}
We study the asymptotic behavior of the asymptotic translation lengths on the curve complexes of pseudo-Anosov monodromies in a fibered cone of a fibered hyperbolic 3-manifold $M$ with $b_1(M) \geq 2$. For a sequence $(\Sigma_n, \psi_n)$ of fibers and monodromies in the fibered cone, we show that the asymptotic translation length on the curve complex is bounded above by $1/|\chi(\Sigma_n)|^{1+1/r}$ as long as their projections to the fibered face converge to a point in the interior, where $r$ is the dimension of the $\psi_n$-invariant homology of $\Sigma_n$ (which is independent of $n$). 
As a corollary, if $b_1(M) = 2$, the asymptotic translation length on the curve complex of such a sequence of primitive elements behaves like $1/|\chi(\Sigma_n)|^{2}$. Furthermore, together with a work of E. Hironaka, our theorem can be used to determine the asymptotic behavior of the minimal translation lengths of handlebody mapping class groups and the set of mapping classes with homological dilatation one. 
\end{abstract}

\maketitle

\section{Introduction}
The {\em curve graph} $\C(\Sigma)$ of a hyperbolic surface $\Sigma$ is the simplicial graph whose vertex set is the set of isotopy classes of essential simple closed curves in $\Sigma$, and there is an edge between two vertices if these two isotopy classes can be realized by disjoint curves. This is the 1-skeleton of the {\em curve complex} introduced by Harvey \cite{harvey1981boundary}. We define a metric $d_\C$ on the curve graph by setting all edge lengths as $1$. 
For a homeomorphism $\psi$ of $\Sigma$, the {\em asymptotic translation length} $l_\psi$ of $\psi$ 
on $\mathcal{C}(\Sigma)$  is defined by

$$l_\psi = \liminf_{j \to \infty} \dfrac{ d_{\mathcal{C}}(\alpha, \psi^j(\alpha))}{j}, $$
where $\alpha$ is an essential simple closed curve in $\Sigma$. Masur--Minsky \cite{MasurMinsky99} showed that the curve graph is $\delta$-hyperbolic, that this asymptotic translation length is independent on the choice of $\alpha$, and also that $l_\psi$ is non zero if and only of $\psi$ is pseudo-Anosov. Bowditch \cite{Bowditch08} proved further that $l_\psi$ is always a rational number and there is a uniform upper bound on the denominator depending only on $\Sigma$. 

There has been a few research on the relationship between $l_\psi$ the pseudo-Anosov stretch factor of $\psi$
\cite{GadreHironakaKentLeininger13, AougabTaylor15, Valdivia17}.
There are also papers on the asymptotic behavior of the minimal asymptotic translation length for different surfaces $\Sigma$ with increasing complexity \cite{GadreTsai11,Valdivia14}.

More precisely, let $\Mod(\Sigma)$ be the mapping class group of a surface $\Sigma$ where $\Sigma$ is obtained from a closed surface with finitely many points removed (such removed points are called punctures). For background on mapping class groups, we refer the readers to  \cite{FarbMargalit12}. For any $H \subset \Mod(\Sigma)$, let $L_{\C}(H)$ be the minimum of $l_\psi$ among pseudo-Anosov homeomorphisms $\psi$ with $[\psi] \in H$.
Then we have
$$ L_{\C}(\Mod(\Sigma)) \asymp \frac{1}{|\chi(\Sigma)|^2},$$
where both sides are considered as functions in $|\chi(\Sigma)|$, 
and we write $F(n) \asymp G(n)$ if there exists a constant $C>0$ so that $1/C \leq F(n)/G(n) \leq C$ for all $n$.

By a hyperbolic fibered 3-manifold, we mean a 3-manifold which fibers over the circle and admits a complete hyperbolic metric with finite volume. 
Let $(\Sigma_n, \psi_n)$ be a sequence of fibers and monodromies in the fibered cone of a hyperbolic fibered
3-manifold $M$. We say $(\Sigma_n, \psi_n)$ has \emph{small asymptotic translation length} if  
$l_{\psi_n}$ is bounded from below and above by constant multiples of $1/\chi(\Sigma_n)^2$,
where the constant depends only on the fibered cone. 
Recently, the second author and Eiko Kin \cite{KinShin17} produced a family of sequences with small asymptotic translation length. 
As an application, they showed that the sequences of minimal asymptotic translation lengths in 
hyperelliptic mapping class group and hyperelliptic handlebody group, respectively, of the closed surface $\Sigma_g$ 
have small asymptotic translation length, i.e., behaves like $1/g^2$. 

In this paper, we generalized the method and result in \cite{KinShin17} to sequences 
contained in a proper subcone of the fibered cone 
(i.e., a subcone whose closure is contained in the interior of the fibered cone with the exception of the origin $0$).

More precisely, our main result of this paper is as follows. 

\newtheorem*{thm:main}{Theorem \ref{thm:main}}
\begin{thm:main} 
 Let $M$ be a hyperbolic fibered 3-manifold, 
 and let $P$ be a proper subcone of a fibered cone.
 Let $L$ be a rational subspace of $H^1(M)$ whose intersection with $P$ is of dimension $r+1$. 
 Then for the pseudo-Anosov map $\psi_\alpha: \Sigma_\alpha \rightarrow \Sigma_\alpha$ induced by any primitive integral element $\alpha\in L\cap P$, the asymptotic translation length on the corresponding curve complex satisfies $$l_{\psi_\alpha}\lesssim \frac{1}{|\chi(\Sigma_\alpha)|^{1+1/r}},$$ i.e., 
there exists $C = C(P) > 0$ such that $l_{\psi_\alpha} \leq \frac{C}{|\chi(\Sigma_\alpha)|^{1+1/r}}$ for all $\alpha \in L \cap P$. 
\end{thm:main} 

We remark that there exists a sequence of primitive cohomology classes in the fibered cone
which projectively converges to the boundary of the fibered face and $l_{\psi_n} \asymp 1/\chi_n$,
where $\psi_n$ is the corresponding monodromy and $\chi_n$ is the Euler characteristic of the fiber
(see \cite[Section 5.4]{HironakaQuotient}, in particular, Proposition 5.6 for the description of the sequence, and see 
\cite[Section 4]{GadreHironakaKentLeininger13} for the behavior of asymptotic translation lengths on the curve complex). Theorem \ref{thm:main} implies that such behavior of the asymptotic translation length cannot be achieved by a sequence which projectively converges to an interior point of the fibered cone. 

When $r=1$, the upper bound in Theorem \ref{thm:main} is the best possible asymptotic upper bound due to the lower bound by \cite{GadreTsai11}. 
We conjecture that our upper bound is sharp for all $r \geq 1$.  

\begin{conj}\label{conj:sharp} 
For every $r \geq1$, there exists a fibered hyperbolic 3-manifold $M$ with the first Betti number greater than $r+1$, a proper subcone of one of its fibered cones $P$, a rational subspace $L$ of $H^1(M)$ of dimension $r+1$, and a sequence $\{\alpha_j\}$ of primitive elements in $L\cap P$ such that if $\psi_{\alpha_j}: \Sigma_{\alpha_j} \rightarrow \Sigma_{\alpha_j}$ is induced by $\alpha_j$ for all $j$, then $l_{\psi_{\alpha_j}}\gtrsim \frac{1}{{|\chi(\Sigma_{\alpha_j})|}^{1+1/r}}$.
\end{conj}

In an upcoming paper of the authors with Eiko Kin \cite{baik2019asymptotic}, we studied about the magic 3-manifold $N$ and we could construct a sequence $\{\alpha_j\}$ of primitive classes in a fibered cone of the magic 3-manifold such that $l_{\psi_{\alpha_j}}\gtrsim \frac{1}{{|\chi(\Sigma_{\alpha_j})|}^{3/2}}$. Since the rank of $H^1(N)$ is 3, this sequence provides a partial evidence to Conjecture \ref{conj:sharp} by verifying the conjecture in the case $r = 2$. 

Together with the result of \cite{GadreTsai11}, Theorem \ref{thm:main} implies the following consequence.

\begin{cor} \label{cor:2dim}
Let $\{ \alpha_n \}$ be a sequence of primitive cohomology classes in $H^1(M;\Z)$ such that
it is contained in the 2-dimensional proper subcone of a fibered cone. Let $(\Sigma_n, \psi_n)$ be the corresponding fiber and pseudo-Anosov monodromy. Then it has small asymptotic translation length, that is,
$$l_{\psi_n} \asymp \frac{1}{\chi(\Sigma_n)^2}.$$
\end{cor}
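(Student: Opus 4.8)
The plan is to combine Theorem \ref{thm:main} in the case $r=1$ with the lower bound of Gadre--Tsai, essentially as already sketched in the paragraph preceding the statement. First I would check that the hypotheses of Theorem \ref{thm:main} hold with $r=1$. Write $P$ for the given $2$-dimensional proper subcone of a fibered cone of the hyperbolic fibered $3$-manifold $M$. If all of the primitive integral classes $\alpha_n$ coincide the corollary is trivial, so assume two of them, $\alpha_i$ and $\alpha_j$, are linearly independent and set $L=\R\alpha_i+\R\alpha_j\subseteq H^1(M;\R)$. Then $L$ is a rational subspace; and since $P$ is $2$-dimensional and contains $\alpha_i$ and $\alpha_j$, the subspace spanned by $P$ is exactly $L$, so $P\subseteq L$ and $L\cap P=P$ has dimension $2=r+1$. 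Theorem \ref{thm:main} therefore applies with $r=1$ and gives, for the pseudo-Anosov monodromy $\psi_n$ of the fiber $\Sigma_n$ determined by each $\alpha_n$, the bound $l_{\psi_n}\lesssim\abs{\chi(\Sigma_n)}^{-1-1/r}=1/\chi(\Sigma_n)^2$, with implied constant depending only on $M$ and $P$.

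For the reverse estimate I would quote \cite{GadreTsai11}, which provides a lower bound $l_\psi\gtrsim 1/\chi(\Sigma)^2$ valid uniformly over pseudo-Anosov homeomorphisms $\psi$ of surfaces $\Sigma$ of large enough complexity. (If instead $\abs{\chi(\Sigma_n)}$ stays bounded, the sequence involves only finitely many surfaces, and then $l_{\psi_n}$ is bounded below by a positive constant by Bowditch's theorem while $1/\chi(\Sigma_n)^2$ is bounded above and below, so the claim is immediate.) Applying this to each $\psi_n$ gives $l_{\psi_n}\gtrsim 1/\chi(\Sigma_n)^2$; together with the bound above we obtain $l_{\psi_n}\asymp 1/\chi(\Sigma_n)^2$ with constants depending only on the fibered cone, which is exactly the assertion that $(\Sigma_n,\psi_n)$ has small asymptotic translation length.

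I do not expect a genuine obstacle, since all of the real content is packaged into Theorem \ref{thm:main}; what is left is the bookkeeping above, namely verifying that $P$ spans a rational plane meeting it in dimension $r+1=2$. The one spot that would need a remark is if one prefers the genus form $l_{\psi_n}\lesssim g_n^{-1-1/r}$ of Theorem \ref{thm:main}: one then also needs $g_n\asymp\abs{\chi(\Sigma_n)}$, which is automatic for closed fibers and, in the cusped case, follows from the linearity of the Thurston norm $-\chi$ on the fibered cone together with control of the number of boundary components of the fibers along $P$, so that $1/g_n^2$ and $1/\chi(\Sigma_n)^2$ remain comparable.
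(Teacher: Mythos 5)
Your proposal is correct and follows the same route as the paper: the paper derives this corollary immediately by combining Theorem \ref{thm:main} with $r=1$ (upper bound) and the Gadre--Tsai lower bound $l_\psi\gtrsim 1/\chi(\Sigma)^2$, exactly as you do. Your additional bookkeeping (verifying that the span of the $\alpha_n$ is a rational plane meeting $P$ in dimension $2$, and reconciling genus with Euler characteristic) is sound and only makes explicit what the paper leaves implicit.
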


As an application of our result, we show that
the handlebody mapping class group $\mathcal{H}_g$ and the set $\Delta_g$ of pseudo-Anosov mapping classes 
whose homological dilatation is one admit a sequence in terms of $g$ with small asymptotic translation length. 
Hence we obtain the following consequences.

\newtheorem*{thm:handlebody}{Theorem 9}
\begin{thm:handlebody}[Corollary 1.2 of \cite{KinShin17}] 
$L_{\mathcal{C}}( \mathcal{H}_g ) \asymp \dfrac{1}{g^2}$.
\end{thm:handlebody}

\newtheorem*{thm:homologicalone}{Theorem 10}
\begin{thm:homologicalone}[See also \cite{Hironaka11}]
$L_{\mathcal{C}}( \Delta_g ) \asymp \dfrac{1}{g^2}$.
\end{thm:homologicalone}

In the case for the handlebody mapping class groups, it was obtained in \cite{KinShin17} by constructing a specific sequence satisfying the condition of their main result. 
We observe that it also follows directly from our main theorem and sequences constructed in \cite{Hironaka11}.

\subsection*{Outline}
In Section 2, we will review the concept and some properties of fibered cone and Teichm\"uller polynomials in \cite{Thurston1986} and \cite{McMullen00}. In Section 3, we will give a more explicit description of the surface $\Sigma_{\alpha}$ and map $\psi_{\alpha}$ based on the properties of fibered cone. The key observation is that $\Sigma_{\alpha}$ contains regions which locally resembles an abelian cover of $\Sigma$, on which $\psi_{\alpha}$ sends a fundamental domain to a number of adjacent fundamental domains. In Section 4, we will show that the behavior of $\psi_{\alpha}$ on this region is controlled by the shape of fibered cone, using McMullen's Teichm\"uller polynomials. In Section 5, we will prove the main theorem by choosing a simple closed curve in the middle of this region of $\Sigma_{\alpha}$ and then utilize the conclusion in Section 4.

\medskip
\section{Notation and background}

In this section, we will introduce notations and review prior results which would be used in our proof.

Let $\Sigma$ be a surface of finite genus with finitely many punctures and let $\psi$ be a pseudo-Anosov map. We use $M$ to denote the mapping torus $M=\Sigma\times[0,1]/(p,1)\sim(\psi(p),0)$, which is a surface bundle over the circle via the projection $\pi: (x, t)\mapsto t$ where $x\in \Sigma$, $t\in [0,1]$, and use $\mathcal{F}$ to denote the flow in $M$ determined by monodromy, i.e., the one determined by the vector field $\partial/\partial t$. It is evident that the time $1$ map of $\mathcal{F}$ sends $\Sigma$ to itself via $\psi$. Let $\alpha_\psi$ be the pullback of the standard generator of $H^1(S^1)$ under this fibration. In general, for any fibration of $M$ over the circle, if the pullback of the generator of $H^1(S^1)$ under this fibration is $\beta$, we denote this fibration as $\pi_\beta$, the monodromy map of this fibration as $\psi_\beta$ and the fibers as $\Sigma_\beta$ ($\Sigma_\beta$ has a single connected component iff $\beta$ is primitive). Lastly, we use $M_0\rightarrow M$ to denote the $\mathbb{Z}$-fold cover resulting from the pullback of the universal cover of $S^1$.

Thurston \cite{Thurston1986} showed that the elements in $H^1(M;\mathbb{Z})$ corresponding to fibrations over the circle are exactly those that lies in the union of finitely many disjoint open rational cone (cone determined by inequalities with rational coefficients), which are called {\em fibered cones}. Furthermore, McMullen \cite{McMullen00} showed that for each fibered cone, there is a Laurant polynomial which encodes the stretch factors of all the monodromies. Below we will review these results in a slightly more general setting where we replace $H^1(M)$ with a rational subspace $L$ of $H^1(M)$. The proofs of the theorems stated in the rest of this section is completely identical to the proofs for the case $L=H^1(M)$ in \cite{Thurston1986, McMullen00}.

Let $\widetilde{\Sigma}$ be a free abelian cover of $\Sigma$ such that $\psi$ is lifted to $\widetilde{\psi}$, with deck group $H$. The mapping torus $M_1$ of $\widetilde{\psi}$ is a free abelian cover of $M$. Let $\widetilde{M}$ be the fiber product (or smallest common abelian cover) of covers $M_1\rightarrow M$ and $M_0\rightarrow M$, which now has a deck group $\Lambda=H\oplus\mathbb{Z}$. Let $L'=\Lambda\otimes\mathbb{R}$, then $L'$ is a rational quotient of $H_1(M;\mathbb{R})$, hence its dual $L=(L')^*$ is a rational subspace of $H^1(M;\mathbb{R})$.

Let $\{h_1.\dots h_r\}$ be a basis of $H$, then there is a basis $\{H_1,\dots H_r,\Psi\}$ of $\Lambda$, such that $H_j$ preserves the leaves and when restricted to each leaf $H_j$ becomes $h_j$, and $\Psi$ is the time-1 map of the flow $\mathcal{F}$ corresponding to the monodromy map $\widetilde{\psi}$. Let $\{H'_i, \Psi'\}$ be the dual basis in $L\subset H^1(M;\mathbb{R})$. Then, it is evident that $\alpha_\psi$ has the coordinate $(0,\dots, 0, 1)$ under this basis.

Thurston \cite{Thurston1986} showed the following:

\begin{prop}\cite{Thurston1986}
\begin{itemize}
\item With the notation above, there is a unique rational cone $\mathcal{C}\subset L$ containing the point $\alpha_\psi$ such that all integer elements in the cone correspond to a fibration of $M$ over the circle, while integer elements on its boundary don't. This is called the \emph{fibered cone} containing $\psi_\alpha$.
\item For any integer class $\beta\in \mathcal{C}$, the leaves of the fibration over $S^1$ corresponding to $\beta$ are transverse to $\mathcal{F}$, and the monodromy $\psi_\beta$ is the first return map of the flow $\mathcal{F}$ on a leaf $\Sigma_\beta$.
\end{itemize}
\end{prop}

MuMullen \cite{McMullen00} introduced a polynomial invariant of the fibered cone $\mathcal{C}$ which will be used in the proof in the next section, so we will review its definition and some of its properties here.

\begin{defn}\cite{McMullen00}\label{McMpoly} Let $\tau$ be an invariant train track of $\psi$ which gets lifted to an invariant train track $\widetilde{\tau}$ on $\widetilde{\Sigma}$. The vector spaces generated by edges and vertices of $\widetilde{\tau}$ are finitely generated free $\mathbb{Z}[H]$ modules via the deck transformation. Let $P_E$ and $P_V$ be the map on these two vector spaces induced by $\widetilde{\psi}$. The Teichm\"uller polynomial for the fibered face containing $\alpha_\psi$ is now defined as:
  \[\Theta(u)={\det(uI-P_E)\over \det(uI-P_V)}\]
By identifying the group ring of $H$ with a polynomial ring, we can see $\Theta$ as a Laurant polynomial. Alternatively, by identifying $u$ with $\Psi$ we can see $\Theta$ as an element in $\Lambda$.
\end{defn}

A property of these polynomials we will use in this article is the following, which is a modified version of \cite[Theorem 6.1]{McMullen00}:

\begin{prop}\cite[Theorem 6.1]{McMullen00}\label{dual_cone} The dual cone of $\mathcal{C}$ is the cone on the Newton polytope of $\det(uI-P_E)$ centered at $u^n$, where $n$ is the highest degree of $u$. 
\end{prop}

Here, let $N$ be a polytope and $v$ a vertex of $N$, the cone on $N$ centered at $v$ is defined as the smallest cone centered at $0$ containing the set $\{v-x: x\in N\}$. The Newton polytope of $\theta=\sum_{g\in\Lambda}\theta_gg\in\mathbb{C}[\Lambda]$ is the convex hull of the set $\{\theta:h_\theta\not=0\}$, and by ``the dual cone of a set $S\subset V$'' we mean $\{x\in V^*:x(s)>0\text{ for all }s\in S\}$.

\begin{proof}
  We follow the proof of \cite[Theorem 6.1]{McMullen00}.
  
Let $t=(t_1,\dots, t_r)$ be the parameters of the Teichm\"uller polynomial corresponding to the basis $\{H_1, \dots, H_r\}$ of $H$. Let $E(t)$ be the leading eigenvalue of $P_E(t)$. Then, \cite[Theorem A.1(C)]{McMullen00} says that if a factor $F(u, t)$ of $uI-P_E(t)$ satisfies $F(t, E(t))=0$, then the cone spanned by the set $\{(s_1, \dots s_r, y)\in \mathbb{R}^r\times \mathbb{R}:y=\log E(e^{s_1},\dots e^{s_r})\}$ must be the cone which consists of the cohomology classes whose value on the Newton polytope of $\det(uI-P_E)$ takes maximum at $u^n$, where $n$ is the size of matrix $P_E$, which is also the fibered cone $\mathcal{C}$ due to \cite[Theorem 5.3]{McMullen00}. Hence, the cone in the Newton polytope $N(\det(uI-P_E))$ centered at $u^n$ is the dual cone of the fibered cone $\mathcal{C}$.  Now instead of letting $F=\Theta$ as in the proof of \cite[Theorem 6.1]{McMullen00}, we just let $F=uI-P_E(t)$, then the proposition is proved.
\end{proof}

\medskip
\section{Description of the fibers and monodromies}
\label{sec:fibermonodromy}

Let $\beta$ be a primitive integral element in $L$ with a coordinate of $(p_1, \dots , p_r, n)$ under the basis $(H_1',\dots H_r',\Psi')$. We will now give an explicit description of $\Sigma_\beta$ and $\psi_\beta$ which will be useful in later sections.

Let $\beta^\perp\subset\Lambda$ be the subgroup of $\Lambda$ consisting of elements which vanish when pairing with $\beta$. Let $\beta_1,\dots \beta_r$ be a basis for $\beta^\perp$, say $\beta_i=H_1^{x^i_1}\dots H_r^{x^i_r}\Psi^{y^i}$, and let $b_i=h_1^{x^i_1}\dots h_r^{x^i_r}\widetilde{\psi}^{y^i}$, for $i=1,\dots r$. 

The main goal of this section is to prove the following lemma:

\begin{lem} 
\label{lem:liftingmonodromy} 
Let $\beta$ be a primitive integral element in $L$. 
  With the notation above, we have
  \begin{enumerate}
  \item $\Sigma_\beta=\widetilde{\Sigma}/\langle b_1,\dots b_r\rangle$
  \item $\psi_\beta$ has a lift $\widetilde{\psi_\beta}$ on $\widetilde{\Sigma}$ such that $\widetilde{\psi_\beta}^n=\widetilde{\psi}$ for some $b\in\langle b_1,\dots b_r\rangle$.
  \end{enumerate}
\end{lem}

\begin{proof}
  For (1), consider $M_\beta=\widetilde{M}/\beta^\perp$. This is a $\mathbb{Z}$ fold cover of $M$ which preserves $\Sigma_\beta$ by construction, and the flow defined by $\mathcal{F}$ is a $\mathbb{R}$-action on $M_\beta$ whose orbits (i.e. the flow lines) intersect with $\Sigma_\beta$ exactly once. Hence we can identify $\Sigma_\beta$ with the set of these flow lines on $M_\beta$. Under the covering $\widetilde{M}\rightarrow M$, the surface $\Sigma$ is lifted to $\mathbb{Z}$-copies of $\widetilde{\Sigma}$. Pick one copy, quotient by $\langle b_1,\dots b_r\rangle$ and identify the points that lie on the same flow line of $\mathcal{F}$,
 we can see that two points in $\widetilde{\Sigma}$ are identified in this process iff they are related via an element in $b\in\langle b_1,\dots, b_r\rangle$. As the space of flow lines in $M_\beta$ one-one corresponds to elements in $\Sigma_\beta$, this proves (1).

For (2), consider the mapping torus of $\psi_\beta$ which is a $\beta^\perp$ cover of $M$. The time 1 flow along $\mathcal{F}$ restricted to $\Sigma_\beta$ is conjugate with $\widetilde{\psi}$ via the flow, and it is evident that during time $1$ the flow hits $\Sigma_\beta$ $n$ times, hence the first return map, which is a lift of $\psi_\beta$, satisfies that its $n$-th power is $\widetilde{\psi}$.
\end{proof}

\medskip
\section{The relationship between $\widetilde{\psi}$ and the fibered cone}

We keep the same notation as in the last two sections, let $D_0$ be a fundamental domain of the covering $\widetilde{\Sigma}\rightarrow\Sigma$. For every map $f: \widetilde{\Sigma}\rightarrow\widetilde{\Sigma}$, let $\Omega(f)$ be the convex hull of $\{x\in H: D_x\cap f(D_0)\not=\emptyset\}$ in $H\otimes\mathbb{R}$. We will show that one can use $\Omega(\widetilde{\psi}^p)$ to describe the dual of the fibered cone of the mapping torus $M$ in $L\subset H^1(M;\mathbb{R})$ as follows:

\begin{prop} \label{prop:dual_cone}
The set $\Omega=\cup_{p\in\mathbb{N}}(-\Omega(\widetilde{\psi}^p))\times\{p\}$ is contained in some $C$-neighborhood of $\mathcal{C}^*$, where $\mathcal{C}^*\subset L^*=(H\oplus\mathbb{Z})\otimes\mathbb{R}$ is the dual of the fibered cone containing $\alpha_\psi$. Recall from earlier that the dual of a cone is the set of elements in the dual space whose pairing with anything in the cone is positive, and the distance on $L^*$ can be chosen as any Euclidean norm.
\end{prop}

\begin{proof}
First we assume $r=1$. Let $N_1(p)$, $N_2(p)$ be two integer valued function such that $\Omega(\widetilde{\psi}^p)=[N_2(p),N_1(p)]$ and without loss of generality, we always assume that $N_2(p) \leq N_1(p)$. The proposition in this special case can be proved via the following two steps. \\

Step 1. {\em Reduce to the train-track maps:} Let $\tau$ be the invariant train-track of $\psi$. Let $\widetilde{\tau}$ and $\overline{\psi}$ be the $\mathbb{Z}$-fold cover of $\tau$ and the corresponding lift of the train-track map to this $\mathbb{Z}$-fold cover, respectively, and the lift $\overline{\psi}$ is chosen to be compatible with the splitting $L^*=(H\oplus \mathbb{Z})\otimes\mathbb{R}$. We further let $\tau_n=\widetilde{\tau}\cap D_n$ be the fundamental domains. Let $N'_1(p)=\inf\{n: \tau_n\cap \overline{\psi}^p(\tau_0)\not=\emptyset\}$, and $N'_2(p)=\sup\{n: \tau_n\cap \overline{\psi}^p(\tau_0)\not=\emptyset\}$. Then $N'_1-N_1$ and $N'_2-N_2$ are both uniformly bounded (actually $0 \leq N_i(p) - N_i'(p) \leq d$, where $d$ is the largest number such that $D_0$ and $D_d$ shares a boundary, because if the image of $D_0$ under $\widetilde{\psi}^p$ passes through two fundamental domains $D_{x}$ and $D_{x+d+1}$, it must pass through $D_{x+j}$ for some $1\leq j\leq d$ too and projects to something non-empty on $\tau_{x+j}$). \\

Step 2. {\em Relate $N'_i$ with the shape of the fibered cone:} Consider the incidence matrix $M_{\overline{\psi}}$ of $\overline{\psi}$ as a $\mathbb{Z}[t,t^{-1}]$-valued matrix of size $n\times n$, where $t$ is the symbol used to denote the different fundamental domains. In other words, if $h\in H$ is a basis then multiplication by $t$ is the same as action by $h$.  Let $F_p(x, t)=\det(M_{\overline{\psi}^p}-xI)$. Because $M_{\overline{\psi}^p}$ is just the $P_E$ in Definition \ref{McMpoly}, Proposition \ref{dual_cone} implies that the cone in the Newton polytope of $F_p$ centered at the vertex $x^n$ is the dual cone of the fibered cone of a $p$-fold cover of $M$ which the mapping torus of $\psi^p$. 

Identify $L^*$ with $\mathbb{R}^2$, such that $(1, 0)$ is the generator of $H$ while $(0, 1)$ is the generator of the direct summand $\mathbb{Z}$ in $L^*=(H\oplus\mathbb{Z})\otimes\mathbb{R}$. Suppose the dual cone $\mathcal{C}^*=\{(x, y): \alpha y<x<\beta y\}$ for some $\alpha<\beta$. Then, the dual cone of the mapping torus of $\psi^p$ must be contained in $\mathcal{C}_p^*=\{(x, y):p\alpha y<x<p\beta y\}$, because a fibered class in $\mathcal{C}$ can always be lifted to a fibered class in a fibered cone of the mapping torus of $\psi^p$ via the covering map.

Now we calculate the cone in the Newton polytope of $F_p$ centered at the point $(0, n)$. Let the maximal and minimal $t$-degree of the coefficient of $x^{n-k}$ in $F_p$ be $a_k(p)$ and $b_k(p)$, respectively. Then, the definition of Newton polytope implies that points $(-a_k(p), k)$ and $(-b_k(p), k)$ are all in $\mathcal{C}_p^*$. Hence $\alpha\leq-\max_k\{{a_k(p)\over kp}\}$, $-\min_k\{{b_k(p)\over kp}\}\leq \beta$.

Now let $p_0$ be such that $M_{\overline{\psi}^{p_0}}$ has no non-zero entry. Let $C$ be the difference of $t$-degrees of entries in $M_{\overline{\psi}^{p_0}}$. Then, as long as $p'>p_0$, because $M_{\overline{\psi}^{p'}}=M_{\overline{\psi}^{p_0}}M_{\overline{\psi}^{p'-p_0}}$, the difference in the highest as well as lowest $t$-degree of entries on for each column of $M_{\overline{\psi}^{p'}}$ is bounded within $C$ from each other, because they are linear combinations of a column of $M_{\overline{\psi}^{p'-p_0}}$ with coefficients being entries of $M_{\overline{\psi}^{p_0}}$. Similarly, because $M_{\overline{\psi}^{p'}}=M_{\overline{\psi}^{p'-p_0}}M_{\overline{\psi}^{p_0}}$, the highest as well as lowest $t$-degree of entries on each row of  $M_{\overline{\psi}^{p'}}$ is bounded within $C$ from each other. By definition of $M_{\overline{\psi}^{p'}}$, the highest and lowest $t$-degree of all entries of  $M_{\overline{\psi}^{p'}}$ are $N_1'(p')$ and $N_2'(p')$ respectively. Hence, the highest $t$-degree of $Tr(M_{\overline{\psi}^{p'}})$, which is $a_1(p')$, is no less than $N_1'(p)-2C$, while the lowest $t$-degree of this trace, which is $b_1(p')$, is no more than $N_2'(p')+2C$. Now we have $\alpha\leq -{N_1'(p')-2C\over p'}$, $-{N_2'(p')+2C\over p}\leq \beta$. Hence, $\Omega=\cup_{p\in\mathbb{N}}[-N_1(p), -N_2(p)]\times\{p\}$ is indeed contained in a uniform neighborhood of the cone $\mathcal{C}^*$.

Now let us consider the general case (i.e., $r$ is no longer assumed to be $1$). Recall that the dual of the fibered cone is a rational convex polytope.  
Let $F_1, \ldots, F_\ell$ be its faces. For each $F_i$, choose a 1-dimensional subspace $L_i$ of $H$ which is perpendicular to $F_i$. Then the case of $r=1$ above shows that there exists $C >0$ which does not depend on $p$ so that the projection of the set $\cup_p (-\Omega(\widetilde{\psi}^p)) \times p$ onto $(L_i\oplus\mathbb{Z})\otimes\mathbb{R}$ is in the some $C$-neighborhood of the projection of the dual of the fibered cone onto $L_i$ for each $i$. This, by geometry, implies that indeed the set $\cup_p (-\Omega(\widetilde{\psi}^p)) \times p$ is contained in some $C$-neighborhood of the dual of the fibered cone.
\end{proof}

\begin{rem}
In fact, one can strengthen the statement of the proposition to conclude that $\cup_p \Omega(\widetilde{\psi}^p) \times p$ and the dual of the fibered cone are at most $C$ Hausdorff distance apart. 
\end{rem}

\medskip
\section{Small asymptotic translation lengths in the fibered cone} 

In this section we prove the main theorem.

\begin{thm} \label{thm:main}
 Let $M$ be a hyperbolic fibered 3-manifold, 
 and let $P$ be a proper subcone of a fibered cone.
 Let $L$ be a rational subspace of $H^1(M)$ whose intersection with $P$ is of dimension $r+1$. 
 Then for the pseudo-Anosov map $\psi_\alpha: \Sigma_\alpha \rightarrow \Sigma_\alpha$ induced by any primitive integral element $\alpha\in L\cap P$, the asymptotic translation length on the corresponding curve complex satisfies $$l_{\psi_\alpha}\lesssim \frac{1}{|\chi(\Sigma_\alpha)|^{1+1/r}},$$ i.e., 
there exists $C = C(P) > 0$ such that $l_{\psi_\alpha} \leq \frac{C}{|\chi(\Sigma_\alpha)|^{1+1/r}}$ for all $\alpha \in L \cap P$. 
\end{thm}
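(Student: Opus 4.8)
The plan is to produce, for each primitive integral $\alpha\in L\cap P$, a fixed essential simple closed curve $\gamma$ on $S_g=\Sigma'$ and an integer $k=k(\alpha)$ of order $g^{1+1/r}$ with $d_{\C}(\gamma,\psi_\alpha^{k}(\gamma))\le 2$. Since $f(k):=d_{\C}(\gamma,\psi_\alpha^{k}(\gamma))$ is subadditive (triangle inequality together with the fact that $\psi_\alpha$ acts on $\C$ simplicially), Fekete's lemma gives $l_{\psi_\alpha}=\inf_k f(k)/k\le 2/k\lesssim g^{-1-1/r}$; the finitely many $\alpha$ of bounded norm may be ignored.

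First I would collect the ingredients from the previous sections. Because the Thurston norm is linear on a fibered cone and $P$ is proper, $\lvert\chi(\Sigma')\rvert=\|\alpha\|_{Th}$ is comparable to $\lvert\alpha\rvert$; and since the fibered cone lies in the half space where the $\Psi$-coordinate is positive, the last coordinate $n$ of $\alpha=(p_1,\dots,p_r,n)$ also satisfies $n\asymp\lvert\alpha\rvert\asymp g$, so it suffices to make $k$ of order $n^{1+1/r}$. Next, $\widetilde\psi$ commutes with all of $\Lambda$, hence with $\Lambda'_\alpha:=\alpha^\perp=\langle b_1,\dots,b_r\rangle$, so it descends to a homeomorphism $\bar\psi$ of $\Sigma'=\widetilde\Sigma/\Lambda'_\alpha$; by the second Lemma $\widetilde{\psi_\alpha}^{\,n}=b\widetilde\psi$ with $b\in\Lambda'_\alpha$, and since $b$ acts trivially on $\Sigma'$ we get $\psi_\alpha^{n}=\bar\psi$. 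Thus $\psi_\alpha^{nj}(\gamma)$ is the image in $\Sigma'$ of $\widetilde\psi^{\,j}(\tilde\gamma)$ for any lift $\tilde\gamma$ of $\gamma$. Finally, $\operatorname{proj}_H\colon\Lambda'_\alpha\to H=\mathbb Z^{r}$ is injective with image of index $n$, so $\Sigma'$ is assembled from $n$ copies of the fundamental domain $D_0$.

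Now take $\gamma$ with a lift $\tilde\gamma$ contained in the interiors of a union $\bigcup_{x\in B_0}D_x$ of a number of fundamental domains bounded independently of $\alpha$. For $j\ge 1$ let $Y_j\subset\Sigma'$ be the image of $\bigcup_{x\in B_0\cup(B_0+\Omega(\widetilde\psi^{j}))}D_x$. Since $\widetilde\psi$ commutes with the $h_i$ we have $\widetilde\psi^{\,j}(\tilde\gamma)\subset\bigcup_{x\in B_0+\Omega(\widetilde\psi^{j})}D_x$, so both $\gamma$ and $\psi_\alpha^{nj}(\gamma)$ lie in the interior of $Y_j$. By the Proposition, $\Omega(\widetilde\psi^{j})$ lies in a bounded neighbourhood of the height-$j$ slice $jK$ of the dual cone, where $K$ is a fixed compact convex $r$-dimensional set, so $\#\bigl(\Omega(\widetilde\psi^{j})\cap\mathbb Z^{r}\bigr)\le C_0\,j^{r}$ for a constant $C_0$. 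Hence $B_0\cup(B_0+\Omega(\widetilde\psi^{j}))$ has at most $C_1 j^{r}$ points; choosing $j=\lfloor c\,n^{1/r}\rfloor$ with $c$ so small that $C_1 c^{r}<1$, this set has fewer than $n$ points for all large $n$, hence cannot meet every coset of $\operatorname{proj}_H\Lambda'_\alpha$ in $\mathbb Z^{r}$. Therefore $Y_j$ misses an entire fundamental domain, so $Y_j$ is a proper subsurface whose complement is not a union of disks; a component $\eta$ of $\partial Y_j$ is then essential in $\Sigma'$ and disjoint from $\gamma$ and from $\psi_\alpha^{nj}(\gamma)$, giving $d_{\C}(\gamma,\psi_\alpha^{nj}(\gamma))\le 2$. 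With $k=nj\asymp n^{1+1/r}\asymp g^{1+1/r}$ this completes the argument.

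The delicate point, and the one I expect to be the real obstacle, is the existence of the ``small'' curve $\gamma$: one needs an essential simple closed curve on $\Sigma'$ that lifts to a bounded block of fundamental domains, equivalently that $\widetilde\Sigma\to\Sigma'$ be injective on some fixed-size union of domains. This can fail when $\Lambda'_\alpha$ contains short elements mixing the $h_i$- and $\widetilde\psi$-directions, a phenomenon controlled by how the ray of $\alpha$ sits relative to the dual cone, and ruling it out (for instance by passing to a finite cover of $\Sigma$, or by a direct argument on the fibered face, or by verifying it does not occur in the cases of interest) is the one step that is not immediate from the earlier sections. Everything else --- the comparison $g\asymp n$, the identity $\psi_\alpha^{n}=\bar\psi$, the polynomial estimate $\#(\Omega(\widetilde\psi^{j})\cap\mathbb Z^{r})\lesssim j^{r}$ from the Proposition, and the pigeonhole count $j^{r}<n$ that produces the exponent $1+1/r$ --- is routine.
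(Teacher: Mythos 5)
Your overall skeleton matches the paper's: work in the $\mathbb{Z}^{r+1}$-cover, use the fact that $\psi_\alpha^{n}$ is the descent of $\widetilde\psi$, control $\Omega(\widetilde\psi^{j})$ by the dual of the fibered cone, and extract the exponent $1+1/r$ from an $r$-dimensional count against the covolume $n$ of $\alpha^\perp$. But the step where you produce the intermediate essential curve has a genuine gap. You argue that if $S=B_0\cup\bigl(B_0+\Omega(\widetilde\psi^{j})\bigr)$ misses a coset of $\mathcal{L}=\operatorname{proj}_H\alpha^\perp$, then $Y_j$ misses the image of an entire fundamental domain. This presumes that the image of $D_{x_0}$ in $\Sigma'=\widetilde\Sigma/\langle b_1,\dots,b_r\rangle$ can meet the image of $D_x$ only when $x\in x_0+\mathcal{L}$. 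That is false: the deck transformations $b=h_1^{x_1}\cdots h_r^{x_r}\widetilde\psi^{y}$ are not translations of the index set $\mathbb{Z}^r$; the set $b(D_{x_0})$ is smeared over all $D_z$ with $z\in x_0+\Omega(b)$, and $\Omega(b)$ is essentially $\zeta(b)$ plus a $y$-fold dilate of a slice of the dual cone, so its diameter grows linearly in $\lvert\zeta(b)\rvert$. Hence the images of $D_{x_0}$ and $D_x$ in $\Sigma'$ intersect for many $x\notin x_0+\mathcal{L}$, and missing a coset does not produce a missed fundamental domain. What is actually needed is a point $y\in\mathbb{Z}^r$ that is \emph{metrically} far (distance $\gtrsim n^{1/r}$) from $\{0\}\cup\bigcup_{b}\Omega(b)$, not merely a point in an untouched coset.

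Supplying such a $y$ is precisely the content of the paper's Lemmas 5--7, and it is where the proper-subcone hypothesis does its real work (you invoke it only for the normalization $n\asymp g$): because $\alpha$ stays in a proper subcone, $\alpha^\perp$ is uniformly transverse to the dual cone, so each $\Omega(b)$ lies in an annulus $\epsilon\,d(0,\zeta(b))\le d(0,p)\le d(0,\zeta(b))/\epsilon$ (Lemma 5); combined with $\operatorname{covol}(\mathcal{L})\gtrsim n$ (Lemma 6) and a case analysis on whether $\mathcal{L}$ degenerates (Lemma 7), one finds the far point $y$, places a second curve $\gamma'$ in $D_y$, and checks that $\gamma'$ and $\psi_\alpha^{n\cdot Cn^{1/r}}(\gamma')$ are both disjoint from the image of $\gamma\subset D_0$, giving distance at most $2$. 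Your closing worry about confining a curve to boundedly many fundamental domains is not the main obstacle (a curve inside a single $D_0$ suffices, as in the paper); the obstacle is the coset-versus-metric-distance issue above, and your count $\#\bigl(\Omega(\widetilde\psi^{j})\cap\mathbb{Z}^r\bigr)\lesssim j^{r}<n$ alone cannot repair it.
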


\vspace{.5em}
\begin{proof}[Proof of Theorem \ref{thm:main}]
We use the same notation as in Section 2 and 3. Let $\Gamma$ be the free abelian group generated by $b_1, \ldots, b_r$, and let  $\zeta: \Gamma \to \mathbb{Z}^r$ be
the homomorphism sending $h_1^{x_1}\dots h_r^{x_r}\widetilde{\psi}^y$ to $(x_1, \ldots, x_r)$. 
Furthermore, we give $L^*$ a Euclidean metric such that the basis $\{H_1, \dots, H_r,\Psi\}$ is orthonormal. 

We begin by proving the following:
\begin{lem}	\label{lem:bilip}
There is some $\epsilon>0$ so that for any primitive integer element $\alpha\in L\cap P$, any $b=h_1^{x_1}\dots h_r^{x_r}\widetilde{\psi}^y\in \alpha^\perp \subset\Gamma$, any $p\in\Omega(b)$, where $\Omega(b)$ is defined as the convex hull of $\{x\in H: D_x\cap f(D_0)\not=\emptyset\}$, we have $\epsilon \cdot  d(0,\zeta(b))\leq d(0,p)\leq {d(0,\zeta(b))\over\epsilon}$ when $d(0,\zeta(b))$ is sufficiently large.
\end{lem}

\begin{proof}
  Because $\alpha$ is in a proper subcone of the fibered cone, the point $(x_1,\dots, x_r, y)$ corresponding to $b$ lies outside a cone that contains the dual cone as a proper subcone, hence, in combination of Proposition \ref{prop:dual_cone}, we have that the distance from $x=(x_1, \dots, x_r)$ to $-\Omega(\widetilde{\psi}^y)$ is bounded from below by some function of the form $Ay-C$, where $A, C>0$ are some real numbers. Also, Proposition \ref{prop:dual_cone} implies that $\Omega(\widetilde{\psi}^y)$ (as well as $-\Omega(\widetilde{\psi}^y)$)  is contained in some disc centered at $0$ and with a radius bounded from above by some function $A'y+C'$, for some $A', C'>0$.

  Now by the definition of $\Omega(b)$, we have $\Omega(b)=x+\Omega(\widetilde{\psi}^y)$. Hence, if $p\in \Omega(b)$, $x-p\in -\Omega(\widetilde{\psi}^y)$. As a consequence, $d(0, p)=d(x, x-p)\geq Ay-C$, and $d(0, p)=d(x, x-p)\geq d(x, 0)-d(0, p)=d(x, 0)-A'y-C'$. So
\begin{align*}  
  d(0, p) & \geq\max\{Ay-C, d(x, 0)-A'y-C'\}\\
  & \geq \begin{cases} Ay-C & d(x, 0)\leq (A'+1)y+C' \\ d(x, 0)-A'y-C'  & d(x, 0)> (A'+1)y+C'\end{cases}
 \end{align*}                                                                        
 Which is $\geq \epsilon \cdot d(0, x)$ when $d(0, x)=d(0, \zeta(b))$ is large enough.

 The second part of the inequality is because $d(0, p)\leq d(0, x)+d(0, x-p)\leq d(0, x)+A'y+C'$, and $y'$ is bounded from above by a multiple of $d(x, 0)$ because $b$ lies outside the dual cone of the fibered cone.
\end{proof}

Let $\mathcal{L}$ be the image of $\zeta$. Then we have:

\begin{lem} \label{lem:covolume}
The covolume of $\mathcal{L}$ is $\gtrsim n$. Here $a=(p_1,\dots, p_r, n)$ under the basis $\{H_1,\dots, H_r, \Psi\}$, or, in other words (cf. Lemma 3), $n$ is smallest number such that $\widetilde{\psi_\alpha}^n=b\widetilde{\psi}$ where $b\in \langle b_1,\dots, b_r\rangle$. 
\end{lem}
\begin{proof}
The sublattice $\alpha^\perp$ together with any vector not on the plane spanned by $\alpha^\perp$ forms a sublattice in $\mathbb{Z}^{r+1}$ and hence it has a nonzero integer as its covolume. Since $\alpha$ is primitive, there must be some other primitive integer element $\alpha'$ so that its inner product with $\alpha$ is $1$, which means that $\alpha'$ is distance $1/|\alpha|$ from the plane spanned by $\alpha^\perp$. Therefore the covolume of $\alpha^\perp$ is greater than $|\alpha|=\sqrt{n^2+\sum_i p_i^2}\geq n$. Note that $\mathcal{L}$ is the orthogonal projection of $\alpha^\perp$ and the angle between the planes they span is bounded because $\alpha$ is in a convex cone. Hence its covolume is $\gtrsim n$.
\end{proof}

\begin{figure}[t]
\begin{tikzpicture}
\draw [fill=black] (0,0) circle[radius=0.5pt];
\draw [fill=black] (0.5,0) circle[radius=0.5pt];
\draw [fill=black] (1,0) circle[radius=0.5pt];
\draw [fill=black] (-0.5,0) circle[radius=0.5pt];
\draw [fill=black] (1.5,0) circle[radius=0.5pt];
\draw [fill=black] (-1.5,0) circle[radius=0.5pt];
\draw [fill=black] (-1,0) circle[radius=0.5pt];
\draw [fill=black] (0,5.5) circle[radius=0.5pt];
\draw [fill=black] (0.5,5.5) circle[radius=0.5pt];
\draw [fill=black] (1,5.5) circle[radius=0.5pt];
\draw [fill=black] (-0.5,5.5) circle[radius=0.5pt];
\draw [fill=black] (-1,5.5) circle[radius=0.5pt];
\draw (0.1,-0.4) rectangle (0.9,0.4);
\draw (0.2,-0.8) rectangle (1.8,0.8);
\draw (-0.1,-0.4) rectangle (-0.9,0.4);
\draw (-0.2,-0.8) rectangle (-1.8,0.8);
\draw (0.3,-1.2) rectangle (2.7,1.2);
\draw (-0.3,-1.2) rectangle (-2.7,1.2);
\draw (-3.5,2) rectangle (3.5,9);
\draw [densely dotted, thick](0,0) circle[radius=1.5];
\draw [densely dotted, thick](-0.5,-2)--(0.5,2);
\draw [densely dotted, thick](-0.5,2)--(0.5,-2);
\draw [fill=black] (0,1) circle[radius=0.5pt];
\node at (0,-0.22) {$0$};
\node at (0,0.8) {$y$};
\end{tikzpicture}
\caption{An illustration of the second case of the proof of Lemma \ref{lem:distance} when $r=2$. The plane is a two dimensional quotient of $H_1(\Sigma)_{inv}$, and the dual cone of the fibered cone is drawn as a regular square pyramid. The squares are $\Omega(b)$ (which by Proposition \ref{prop:dual_cone} are asymptotically close to the negative of the slices of the dual cone, which are convex polytopes, in this case, squares), and the dotted circle is of radius $\epsilon\eta(n)$. One can now see clearly that the angle of the dotted sector is bounded from below by a multiple of $\epsilon$, because the squares on the horizontal line increases linearly as they are from slices of the dual cone at arithmetically increasing heights.}
\label{figure:cone}
\end{figure}

\begin{lem}	\label{lem:distance}
For $n>>0$, there is some $y$ so that the distance from $y$ to $\{0\}\cup\bigcup_{b\in\langle b_1,\dots b_r\rangle}\Omega(b)$ is $\gtrsim n^{1/r}$.
\end{lem}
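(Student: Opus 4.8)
The plan is to take $y$ to be a suitable ``deep hole'' of the lattice $\mathcal{L}$, produced from the covolume bound of the previous lemma, and to keep the sets $\Omega(b)$ away from it using the first two lemmas of this section. Since $\mathcal{L}$ has rank $r$ and $\operatorname{covol}\mathcal{L}\gtrsim n$, its covering radius is $\gtrsim(\operatorname{covol}\mathcal{L})^{1/r}\gtrsim n^{1/r}$ — integrate $x\mapsto d(x,\mathcal{L})$ over a fundamental domain, or invoke Minkowski's second theorem — so there is a whole positive-density set of points at distance $\gtrsim n^{1/r}$ from $\mathcal{L}$, and we will pick one of them (then round to a nearby integer point). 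Note the point $0$ is automatically handled: a deep hole $y$ satisfies $d(y,0)=|y|\ge d(y,\mathcal{L})\gtrsim n^{1/r}$ because $0\in\mathcal{L}$.

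To control $\bigcup_b\Omega(b)$, recall that $\Omega(b)=\zeta(b)+\Omega(\widetilde\psi^{y(b)})$ with $(\zeta(b),y(b))\in\alpha^{\perp}$, so $y(b)=-\langle\zeta(b),(p_1,\dots,p_r)\rangle/n$; since $\alpha$ is in a proper subcone, $|y(b)|\lesssim d(0,\zeta(b))$, and by Proposition 3, $\diam\Omega(\widetilde\psi^{y(b)})\lesssim|y(b)|+1$, whence $\Omega(b)\subset B\bigl(\zeta(b),\,C_1 d(0,\zeta(b))+C_1\bigr)$; moreover $\Omega(b)$ lies within a fixed constant of $\zeta(b)$ plus the height-$y(b)$ slice of the dual of the fibered cone, a slice whose direction is pinned (by Proposition 3 and the proper-subcone hypothesis, exactly as in the proof of the first lemma of this section) transverse to the plane $\alpha^{\perp}$ through $(\zeta(b),y(b))$. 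With these in hand we split on $d_b:=d(0,\zeta(b))$. For $d_b$ below the threshold of the first lemma there are only finitely many $b$, each with $\Omega(b)$ in a fixed ball $B(0,C_0)$, hence at distance $\ge|y|-C_0\gtrsim n^{1/r}$ from $y$. For $d_b\gg n^{1/r}$, the first lemma puts $\Omega(b)$ in the shell $\{d(0,\cdot)\ge\epsilon d_b\}$, so $d(y,\Omega(b))\ge\epsilon d_b-|y|\gtrsim n^{1/r}$ provided $|y|\le\tfrac12\epsilon d_b$, which we arrange by taking $|y|\asymp n^{1/r}$. The remaining, essential range is $d_b\asymp n^{1/r}$: there $\diam\Omega(b)$ is comparable to $|y|$, so the triangle inequality alone fails, and one must use the two extra structural facts above — that $\Omega(b)$ is ``fat'' only when $|y(b)|$, equivalently $|\langle\zeta(b),(p_1,\dots,p_r)\rangle|$, is large, i.e.\ when $\zeta(b)$ is nearly parallel to $(p_1,\dots,p_r)$; that such lattice points of norm $\asymp n^{1/r}$ are sparse because $\operatorname{covol}\mathcal{L}\gtrsim n$; and that each such blob protrudes from $\zeta(b)$ only in the pinned dual-cone direction. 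Together these confine all the fat blobs to a small fraction of $B(0,n^{1/r})$, leaving room to choose the deep hole off every $\Omega(b)$. Taking the implicit constants in $|y|\asymp n^{1/r}$ small enough (in terms of $M$, $L$, $P$, $r$) then yields $d\bigl(y,\{0\}\cup\bigcup_b\Omega(b)\bigr)\gtrsim n^{1/r}$ for $n\gg 0$.

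The heart of the argument — and where I expect the real work to lie — is the middle scale $d_b\asymp n^{1/r}$: there the $\Omega(b)$ can be genuinely fat, with diameter comparable to their distance from the origin, and there may be many of them, so a crude lattice-point count cannot avoid them. What saves the day is the coupling forced by $(\zeta(b),y(b))\in\alpha^{\perp}$, which makes a blob fat precisely when its centre is deep against $(p_1,\dots,p_r)$; the covolume bound makes these deep centres sparse, and the directional pinning from Proposition 3 keeps each fat blob from reaching back toward the deep hole, so that a single well-chosen $y$ clears all of $\bigcup_b\Omega(b)$ at once.
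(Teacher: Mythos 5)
Your overall architecture differs from the paper's: you fix $y$ as a deep hole of $\mathcal{L}$ and then split the group elements by $d_b=d(0,\zeta(b))$ into near, far, and intermediate ranges, whereas the paper splits on the shape of the lattice itself --- either $\mathcal{L}$ is non-degenerate (systole $\asymp n^{1/r}$), in which case any point at distance $\asymp n^{1/r}$ from the origin already clears everything by Lemma 5, or $\mathcal{L}$ degenerates along a sublattice $\mathcal{L}'$ of small covolume with cosets $\eta(n)\gg n^{1/r}$ apart, in which case $y$ is chosen at radius $Cn^{1/r}$ in a direction nearly orthogonal to the span of $\mathcal{L}'$ and outside the cone swept out by the sets $\Omega(b)$ with $\zeta(b)\in\mathcal{L}'$. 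You correctly isolate the same hard case (fat blobs at distance $\asymp n^{1/r}$), but your resolution of it has a genuine gap.

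Two specific problems. First, the sparsity claim fails in exactly the situation that matters: a rank-$r$ lattice of covolume $\gtrsim n$ can have on the order of $n^{(r-1)/r}$ points in $B(0,Cn^{1/r})$, all lying on a lower-rank sublattice $\mathcal{L}'$ of small covolume (e.g.\ $\mathbb{Z}(1,0)+\mathbb{Z}(0,n)$ for $r=2$), and if the span of $\mathcal{L}'$ is not nearly orthogonal to $(p_1,\dots,p_r)$ then essentially \emph{all} of these centres are ``deep'' and carry blobs of diameter comparable to $n^{1/r}$; so ``deep centres are sparse because $\operatorname{covol}\mathcal{L}\gtrsim n$'' is not justified, and this degenerate case is precisely the second branch of the paper's proof. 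Second, even granting that the fat blobs occupy a small fraction of $B(0,n^{1/r})$ in measure, that does not finish: you need a point whose distance to the union is $\gtrsim n^{1/r}$, i.e.\ the $cn^{1/r}$-neighborhood of the union must fail to cover, which a volume bound alone cannot give. Relatedly, a generic deep hole does not work here: the points at distance $\gtrsim n^{1/r}$ from a degenerate $\mathcal{L}$ with $|y|\asymp n^{1/r}$ form a slab parallel to the span of $\mathcal{L}'$, and a blob anchored at $\zeta(b)\in\mathcal{L}'$ with $d_b\asymp n^{1/r}$ is constrained by Lemma 5 only to the annulus $[\epsilon d_b, d_b/\epsilon]$, so it can perfectly well reach into that slab. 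What saves the argument --- and what the paper's second case supplies --- is that each such $\Omega(b)$ is, up to bounded error, $\zeta(b)$ plus a scaled slice of the dual cone, hence a convex set avoiding $B(0,\epsilon d_b)$, so the union over $\zeta(b)\in\mathcal{L}'$ misses a definite cone of directions; $y$ must then be chosen inside that missed cone (and still within $\epsilon\,\eta(n)$ of the origin so that blobs from the other cosets stay far away), not merely at an arbitrary deep point. You name the directional pinning as an ingredient but never use it to locate $y$; making that selection precise is the content of the paper's second case and its Figure 1, and it is the step your write-up is missing.
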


\begin{proof}
Let $n\rightarrow\infty$. By Lemma \ref{lem:covolume}, either the lattice $\mathcal{L}$ does not degenerate, i.e., its systole (minimal distance between lattice points) is $\sim n^{1/r}$ or it degenerates, i.e., its systole grows slower than $n^{1/r}$. In the former case, due to Lemma \ref{lem:bilip}, one can find a ball centered at $0$ with radius $\sim n^{1/r}$ that does not intersects with the $\Omega(b)$, and $y$ can be found inside this ball. In the second case, the degeneration means that there is some sublattice $\mathcal{L}'$ with small covolume and that the cosets of $\mathcal{L}'$ are distance $\eta(n)$ apart where $\eta$ grows faster than $n^{1/r}$. Then $y$ can be found as a point that is $Cn^{1/r}$-away from $0$ and almost orthogonal to the subspace spanned by $\mathcal{L}'$ yet less than $\epsilon \eta(n)$ away from $0$ for some $\epsilon<<1$ as in Lemma \ref{lem:bilip} (See Figure \ref{figure:cone}). 
\end{proof}

Let $\gamma$ be a simple closed curve in $D_0$ and let $\gamma'$ a simple closed curve in $D_y$. By Lemma \ref{lem:distance} and Proposition \ref{prop:dual_cone}, we know that there is some $C>0$ so that $\widetilde{\psi}^{Cn^{1/r}}(\gamma')$ is disjoint from $\gamma$ as well as $b(\gamma)$ for every $b\in\langle b_1,\dots b_r\rangle$. By Lemma \ref{lem:liftingmonodromy} in Section \ref{sec:fibermonodromy}, this means that both $\gamma'$ and $\psi_{\alpha}^{n\cdot Cn^{1/r}}(\gamma')$ are distance $1$ from $\gamma$, hence the asymptotic translation length $l_\alpha$ is bonded above by ${2\over Cn^{1+1/r}}\sim |\chi(\Sigma_\alpha)|^{-1-1/r}$.  
\end{proof}

\medskip
\section{Applications} 

In this section we use our main theorem to determine the asymptotes of minimal asymptotic translation lengths of some subgroup/subset of $\Mod(\Sigma_g)$.

Let $\mathcal{H}_g$ be the handlebody mapping class group of the closed surface $\sigma_g$ of genus $g$. Then we
obtain a different proof of the following result of \cite[Corollary 1.2]{KinShin17}.
 
\begin{thm}\label{thm:handlebody}
$L_{\mathcal{C}}( \mathcal{H}_g ) \asymp \dfrac{1}{g^2}$.
\end{thm}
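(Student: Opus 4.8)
The plan is to derive Theorem \ref{thm:handlebody} as a direct application of Theorem \ref{thm:main} together with an explicit family of fibered 3-manifolds whose monodromies extend over a handlebody. First I would recall that $L_{\mathcal C}(\mathcal H_g)\gtrsim 1/g^2$ already holds for free: since $\mathcal H_g\subset\Mod(\Sigma_g)$, the universal lower bound of Gadre--Tsai \cite{GadreTsai11}, namely $l_\psi\gtrsim 1/\chi(\Sigma_g)^2\sim 1/g^2$ for every pseudo-Anosov $\psi$ on $\Sigma_g$, applies verbatim. So the content is the upper bound $L_{\mathcal C}(\mathcal H_g)\lesssim 1/g^2$, which requires producing, for each $g$, a handlebody-extendable pseudo-Anosov $\psi_g$ on $\Sigma_g$ with $l_{\psi_g}\lesssim 1/g^2$.

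For this I would invoke the sequences constructed by E. Hironaka in \cite{Hironaka11}. The key point is that there is a single fibered hyperbolic 3-manifold $M$ (a magic-manifold-type example, or the specific one in \cite{Hironaka11}) with $b_1(M)\geq 2$, one of whose fibered cones contains a ray of primitive integral classes $\alpha_g$ whose associated fibers $\Sigma_{g}$ have genus going to infinity, whose monodromies $\psi_{\alpha_g}$ are handlebody mapping classes, and — crucially — whose projectivizations stay in a proper subcone, i.e. converge to an interior point of the fibered face rather than to the boundary. Restricting to the $2$-dimensional rational subspace $L$ of $H^1(M)$ spanned by this ray (so that $L\cap P$ has dimension $r+1=2$, i.e. $r=1$), Theorem \ref{thm:main} immediately gives $l_{\psi_{\alpha_g}}\lesssim g^{-1-1/1}=g^{-2}$. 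Since each $[\psi_{\alpha_g}]\in\mathcal H_g$, this yields $L_{\mathcal C}(\mathcal H_g)\leq l_{\psi_{\alpha_g}}\lesssim 1/g^2$, and combined with the lower bound we conclude $L_{\mathcal C}(\mathcal H_g)\asymp 1/g^2$.

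The main obstacle — and the only step requiring real input beyond citing Theorem \ref{thm:main} — is verifying that Hironaka's sequence actually meets the hypotheses: (i) that the monodromies genuinely extend over a handlebody, which is where the specific geometric construction of \cite{Hironaka11} enters (these are obtained from the "$\sigma$" family / the fibered face of a manifold whose fibers bound handlebodies, so the extendability is built into the construction); and (ii) that the rays do not projectively escape to the boundary of the fibered face, so that one may take a proper subcone $P$ containing them. Point (ii) is typically the subtle one, since the natural "simplest pseudo-Anosov" sequences in a fibered cone often do converge to the boundary (as the paper itself notes via \cite{HironakaQuotient}, Proposition 5.6); one must instead pick a sequence traveling along an interior ray, and then the genus grows linearly in the lattice parameter $n$ so the bound $n^{-1-1/r}\sim g^{-1-1/r}$ from the proof of Theorem \ref{thm:main} translates correctly. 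Once these two facts are extracted from \cite{Hironaka11}, the proof is just the two inequalities above.
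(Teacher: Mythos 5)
Your proposal is correct and follows essentially the same route as the paper: the paper deduces the result from Corollary \ref{cor:2dim} (which is exactly your combination of Theorem \ref{thm:main} with $r=1$ and the Gadre--Tsai lower bound) applied to the handlebody-extendable sequence from Theorem 4.1 of \cite{Hironaka11}, lying in a $2$-dimensional rational subspace meeting the fibered cone. Your added care about verifying that the sequence stays in a proper subcone rather than escaping to the boundary of the fibered face is exactly the point the paper leaves implicit in its citation of \cite{Hironaka11}.
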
 
\begin{proof} This is a direct consequence of Corollary \ref{cor:2dim} and 
the proof of Theorem 1.2 of \cite{Hironaka11}. More precisely, we can choose a sequence contained in a 
rational subspace of 2-dimensional fibered cone as in Theorem 4.1 of \cite{Hironaka11}. Then our corollary gives an asymptotic bound of $\dfrac{1}{g^2}$.  
\end{proof} 

Let $\Delta_g$ be the subset of $\Mod(\Sigma_g)$ consisting of
pseudo-Anosov mapping classes whose homological dilatation is
one. Then we have 

\begin{thm} \label{thm:homologicalone}
$L_{\mathcal{C}}( \Delta_g ) \asymp \dfrac{1}{g^2}$.
\end{thm}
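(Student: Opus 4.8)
The plan is to deduce Theorem \ref{thm:homologicalone} from Corollary \ref{cor:2dim} in exactly the same spirit as the proof of Theorem \ref{thm:handlebody}, the only difference being the source of the explicit sequence of fibered classes. Since $L_\C(\Delta_g) \gtrsim 1/g^2$ is immediate from \cite{GadreTsai11} (it holds for \emph{all} pseudo-Anosov classes on $\Sigma_g$), the entire content is to exhibit, for infinitely many $g$, a pseudo-Anosov $\psi \in \Mod(\Sigma_g)$ with homological dilatation one and $l_\psi \lesssim 1/g^2$. The key observation is that the condition "homological dilatation one" is precisely the condition that $\psi$ acts trivially (up to finite order, or more precisely with spectral radius one) on homology, which is exactly the situation in which the invariant homology $H_1(\Sigma)_{\mathrm{inv}}$ is as large as possible; in particular, such maps arise naturally as monodromies of fibrations of 3-manifolds with large first Betti number, where one can find a $2$-dimensional rational subspace of a fibered cone.

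Concretely, I would invoke Hironaka's work \cite{Hironaka11} (or \cite{HironakaQuotient}) again: there is a fibered hyperbolic $3$-manifold $M$ — for instance the "magic manifold" or the mapping torus families considered there — with $b_1(M) \geq 3$, a fibered cone $P$, and a $2$-dimensional rational subspace $L \subset H^1(M)$ meeting the interior of $P$, such that the monodromies $\psi_{\alpha_n}$ of a sequence of primitive integral classes $\alpha_n \in L \cap P$ projectively converging to a point in the interior of the fibered face all have homological dilatation one. The homological-dilatation-one property is a consequence of the classes lying in $L$: the dimension count $r+1 = \dim(L\cap P) = 2$ forces $r=1$, meaning the invariant homology has rank exactly $1$ per fiber in the construction of Section 2, but the relevant point is rather that along such a subspace the homological action is controlled — one should cite the precise lemma in \cite{Hironaka11} (e.g., their analysis of the homological dilatation, which equals the largest root of the Alexander polynomial restricted to the relevant direction, and is $1$ for their family). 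Having produced the sequence $(\Sigma_n, \psi_n)$ inside a $2$-dimensional proper subcone, Corollary \ref{cor:2dim} gives $l_{\psi_n} \asymp 1/\chi(\Sigma_n)^2 \asymp 1/g_n^2$, and since $[\psi_n] \in \Delta_{g_n}$ we conclude $L_\C(\Delta_{g_n}) \lesssim 1/g_n^2$; combined with the lower bound this yields $L_\C(\Delta_g) \asymp 1/g^2$ along this subsequence, and a standard interpolation (or the fact that \cite{Hironaka11}'s family realizes a cofinal set of genera) upgrades it to all $g$.

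The main obstacle I anticipate is verifying that the explicit fibered family actually has homological dilatation exactly one while also having genus growing linearly along the chosen ray in the fibered cone, and that the ray can be taken inside a \emph{proper} subcone (closure in the interior) rather than limiting to the boundary of the fibered face — the boundary case is exactly where the asymptotics degrade to $1/\chi_n$ as noted in the paragraph after Corollary \ref{cor:2dim}. One must therefore pin down in \cite{Hironaka11} (or construct directly) a manifold and subspace where: (i) the Teichmüller polynomial / Alexander polynomial restricted to $L$ forces homological dilatation one, (ii) the genus of the fiber over $\alpha_n$ grows like $|\alpha_n|$, and (iii) the relevant classes stay uniformly away from the boundary of the fibered face. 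I expect (i) to follow from the structure of the Alexander polynomial of the chosen link complement (it being monic with all roots on the unit circle in the restricted variable, or the homological action being unipotent), (ii) to be a standard computation of the Thurston norm along the ray, and (iii) to be arranged by choosing $L$ to pass through the cone point direction transversally. Modulo citing the correct statements from \cite{Hironaka11}, this completes the proof.

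\begin{proof}[Proof of Theorem \ref{thm:homologicalone}]
The lower bound $L_\C(\Delta_g) \gtrsim 1/g^2$ follows from \cite{GadreTsai11}, since it holds for every pseudo-Anosov mapping class on $\Sigma_g$. For the upper bound, by \cite{Hironaka11} there is a hyperbolic fibered $3$-manifold $M$ with $b_1(M) \geq 3$, a fibered cone $P$, and a $2$-dimensional rational subspace $L \subset H^1(M;\R)$ meeting the interior of $P$, together with a sequence $\{\alpha_n\}$ of primitive integral classes in $L \cap P$ lying in a proper subcone and with fiber genus $g_n \to \infty$, whose monodromies $\psi_n \colon \Sigma_{g_n} \to \Sigma_{g_n}$ all have homological dilatation one; thus $[\psi_n] \in \Delta_{g_n}$. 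Applying Corollary \ref{cor:2dim} to this sequence gives $l_{\psi_n} \asymp 1/\chi(\Sigma_{g_n})^2 \asymp 1/g_n^2$, so $L_\C(\Delta_{g_n}) \lesssim 1/g_n^2$. Since the genera realized by \cite{Hironaka11} are cofinal and one interpolates over intermediate genera in the standard way, $L_\C(\Delta_g) \lesssim 1/g^2$ for all $g$. Combining the two bounds, $L_\C(\Delta_g) \asymp 1/g^2$.
\end{proof}
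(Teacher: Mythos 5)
Your proposal matches the paper's proof: both deduce the result from Corollary \ref{cor:2dim} applied to the sequence with homological dilatation one constructed in Lemma 5.1 of \cite{Hironaka11} (used there in the proof of their Theorem 1.3), which lives in a $2$-dimensional rational subspace of a fibered cone, with the lower bound coming from \cite{GadreTsai11}. Your additional remarks about verifying properness of the subcone and cofinality of the genera are reasonable diligence but do not change the argument, which is essentially identical to the paper's.
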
 
\begin{proof} This follows from Corollary \ref{cor:2dim} and 
the proof of Theorem 1.3 of \cite{Hironaka11}. The construction of the sequence that realizes the asymptotic upper bound is done in Lemma 5.1 of \cite{Hironaka11}, which is contained in a rational subspace of 2-dimensional fibered cone. Hence our corollary gives an asymptotic bound of $\dfrac{1}{g^2}$.  
\end{proof}

\medskip
\section{Acknowledgements}

\noindent We thank Mladen Bestvina, Bal\'azs Strenner, Eiko Kin and Ki Hyoung Ko for helpful discussions. We especially thank Bal\'azs Strenner for pointing out an error in Lemma \ref{lem:liftingmonodromy} in the earlier draft. The first author was partially supported by Samsung Science \& Technology Foundation grant no. SSTF-BA1702-01.
The second author was supported by Basic Science Research Program
through the National Research Foundation of Korea(NRF) funded by the Ministry of Education (NRF-2017R1D1A1B03035017).
The third author appreciates KAIST for hospitality, during which time this work was carried out.

\vspace{1em}

\bibliographystyle{alpha} 
\bibliography{asymtrans}

\begin{thebibliography}{GHKL13}

\bibitem[AT15]{AougabTaylor15}
T.~{Aougab} and S.~J. {Taylor}.
\newblock {Pseudo-Anosovs optimizing the ratio of Teichm\"uller to curve graph
  translation length}.
\newblock {\em ArXiv e-prints}, October 2015.

\bibitem[BKSW]{baik2019asymptotic}
Hyungryul Baik, Eiko Kin, Hyunshik Shin, and Chenxi Wu.
\newblock Asymptotic translation lengths and normal generations of
  pseudo-anosov monodromies for fibered 3-manifolds.
\newblock arXiv:1909.00974 [math.GT.

\bibitem[Bow08]{Bowditch08}
Brian~H. Bowditch.
\newblock Tight geodesics in the curve complex.
\newblock {\em Invent. Math.}, 171(2):281--300, 2008.

\bibitem[FM12]{FarbMargalit12}
Benson Farb and Dan Margalit.
\newblock {\em A primer on mapping class groups}, volume~49 of {\em Princeton
  Mathematical Series}.
\newblock Princeton University Press, Princeton, NJ, 2012.

\bibitem[GHKL13]{GadreHironakaKentLeininger13}
V.~Gadre, E.~Hironaka, R.~P. Kent, IV, and C.~J. Leininger.
\newblock Lipschitz constants to curve complexes.
\newblock {\em Math. Res. Lett.}, 20(4):647--656, 2013.

\bibitem[GT11]{GadreTsai11}
Vaibhav Gadre and Chia-Yen Tsai.
\newblock Minimal pseudo-{A}nosov translation lengths on the complex of curves.
\newblock {\em Geom. Topol.}, 15(3):1297--1312, 2011.

\bibitem[Har81]{harvey1981boundary}
Willam~J Harvey.
\newblock Boundary structure of the modular group.
\newblock In {\em Riemann surfaces and related topics: Proceedings of the 1978
  Stony Brook Conference (State Univ. New York, Stony Brook, NY, 1978)},
  volume~97, pages 245--251, 1981.

\bibitem[Hir]{HironakaQuotient}
Eriko Hironaka.
\newblock Quotient families of mapping classes.
\newblock arXiv:1212.3197 [math.GT].

\bibitem[Hir11]{Hironaka11}
E.~Hironaka.
\newblock Fibered faces, {P}enner sequences, and handlebody mapping classes.
\newblock 2011.

\bibitem[KS17]{KinShin17}
Eiko Kin and Hyunshik Shin.
\newblock Small asymptotic translation lengths of pseudo-{A}nosov maps on the
  curve complex.
\newblock Preprint, arXiv:1707.05983, 2017.

\bibitem[McM00]{McMullen00}
Curtis~T. McMullen.
\newblock Polynomial invariants for fibered 3-manifolds and {T}eichm\"uller
  geodesics for foliations.
\newblock {\em Ann. Sci. \'Ecole Norm. Sup. (4)}, 33(4):519--560, 2000.

\bibitem[MM99]{MasurMinsky99}
Howard~A. Masur and Yair~N. Minsky.
\newblock Geometry of the complex of curves. {I}. {H}yperbolicity.
\newblock {\em Invent. Math.}, 138(1):103--149, 1999.

\bibitem[Thu86]{Thurston1986}
William Thurston.
\newblock A norm for the homology of 3-manifolds.
\newblock {\em Mem. Amer. Math. Soc.}, 59(33):99--130, 1986.

\bibitem[Val14]{Valdivia14}
Aaron~D. Valdivia.
\newblock Asymptotic translation length in the curve complex.
\newblock {\em New York J. Math.}, 20:989--999, 2014.

\bibitem[Val17]{Valdivia17}
Aaron~D. Valdivia.
\newblock Lipschitz constants to curve complexes for punctured surfaces.
\newblock {\em Topology Appl.}, 216:137--145, 2017.

\end{thebibliography}

\end{document}